\documentclass[reqno,english]{amsart}

\usepackage{packages}
\usepackage{macros}

\usepackage{natbib}

\newcommand*{\dreg}{\mathcal{G}_{n,\Delta}}

\newcommand*{\net}{\mathcal{N}}
\newcommand*{\netB}{\mathcal{N_{\mathcal B}}}
\newcommand*{\seeds}{\mathcal{S}}
\newcommand*{\vx}{\mathbf{x}}
\newcommand*{\hx}{\hat{\vx}}

\newcommand*{\tuples}{\Xi}
\newcommand*{\ctail}{c_{\text{\tiny{\ref{thm:graphs}}}}}

\def\Vol{{\rm Vol}}

\def\calL{{\mathcal{L}}}

\newtheorem*{theorem*}{Theorem}
\newtheorem*{problem}{Problem}

\title[Metric dimension reduction modulus]{Metric dimension reduction modulus \\ for superlogarithmic distortion}

\author[D.\ J.\ Altschuler \and K.\ Tikhomirov]{
        Dylan J. Altschuler \and 
        Konstantin Tikhomirov 
        }

\address{Dylan J. Altschuler, Department of Mathematical Sciences, Carnegie Mellon University.}
\address{Konstantin Tikhomirov, Department of Mathematical Sciences, Carnegie Mellon University.}
\begin{document}

\begin{abstract}
The metric dimension reduction modulus $k^\alpha_n(\ell_\infty)$ is the smallest $k$ such that every $n$--point metric space can be embedded into some $k$-dimensional normed space, with bi--Lipschitz distortion at most $\alpha$. Determining sharp asymptotics for $k^\alpha_n(\ell_\infty)$ is a fundamental task in metric geometry,
with $\alpha=\Theta(\log n)$ bearing particular interest. A line of advances over the past decades has led to an upper bound on 
$k^{\alpha}_n(\ell_\infty)$ for $\alpha = \Omega(\log n)$, but a matching lower bound has remained open. We close this gap, establishing: for every fixed $\beta > 0$, 
$$
k^{\alpha}_n(\ell_\infty) =\Theta\bigg(\frac{\log n}{\log(\frac{\alpha}{\log n}+1)}\bigg)\quad
\mbox{for every $\alpha\geq \beta \log n$}.
$$
This resolves a question from Naor's 2018 ICM plenary lecture. 
Our result is obtained by characterizing the minimum dimension $d$ for which, with high probability, a random regular graph admits an $\alpha$--embedding into some $d$--dimensional normed space.
\end{abstract}

\maketitle
\section{Introduction}

The celebrated dimension reduction result of Johnson and Lindenstrauss \cite{JLlemma} asserts that every $n$--point subset of $\ell_2$ admits a bi--Lipschitz embedding into an $O(\log n)$--dimensional linear subspace of $\ell_2$. Combining Bourgain's embedding \cite{Bourgain-Lipschitz} with Dvoretsky's theorem \cite{Dvoretzky} and the Johnson--Lindenstrauss lemma yields a remarkable generalization beyond the Euclidean setting:

\begin{center}
\vspace{0.5em}
\textit{Any} $n$--point metric space admits an $O(\log n)$--distortion embedding into\\ an $O(\log n)$--dimensional subspace of \textit{any} infinite dimensional normed space. 
\vspace{0.5em}
\end{center}

This foundational result has driven decades of research in metric geometry aimed at quantitatively characterizing which metric spaces admit low--distortion embeddings into low--dimensional normed spaces. Yet, the sharpness of the above statement remains open. This article closes that gap by establishing sharp bounds for the metric dimension reduction modulus.

\begin{definition}[Metric dimension reduction modulus \cite{naor-snapshot}]
    Let $n \in \mathbb{N}$ and $\alpha := \alpha(n) \in[1,\infty)$.
Let $X$ be a normed space, possibly infinite dimensional.
Denote by $k^\alpha_n(X)$ the minimum $k \in\mathbb{N}$
such that every $n$ point subset $S\subset X$ embeds with bi-Lipschitz distortion
at most $\alpha$ into some 
$k$--dimensional
linear subspace $F_S$ of $X$ (with respect to the metric induced by $X$). That is, there is a mapping $f:S\to F_S$ satisfying
\[
    \|f(s)-f(s')\|_X\leq \|s-s'\|_X\leq \alpha \,\|f(s)-f(s')\|_X,
\quad \mbox{ for all }s,s'\in S.
\]
\end{definition}

Since every finite metric space and every finite-dimensional normed space can be isometrically embedded into $\ell_\infty$, the quantity $k^\alpha_n(\ell_\infty)$ is the minimum $k$ such that every $n$--point metric space can be $\alpha$--embedded into some $k$-dimensional normed space $X$. The quantity $k^\alpha_n(X)$, particularly with $X = \ell_\infty$, plays a crucial role in the theory of metric embeddings. Works developing bounds on $k^\alpha_n(\ell_\infty)$ include \cite{Bourgain-Lipschitz,JLS87,deReynaPiazza92,LLR95,Matousek-distortion,ABN2011,Naor2017,naor-snapshot}. We refer to Naor's 2018 ICM plenary lecture \cite[Section 1.3]{naor-snapshot} for a comprehensive overview of the vast literature around metric dimension reduction.

In terms of the dimension reduction modulus, Bourgain's embedding theorem \cite{Bourgain-Lipschitz} implies 
that for $\beta$ sufficiently large, $k_n^{\beta \log n}(\ell_\infty) = O(\log n)$, and more generally
(by combining with the Johnson--Lindenstrauss lemma and
Dvoretsky's theorem \cite{Dvoretzky}), 
for every infinite--dimensional Banach space $X$
and sufficiently large $\beta>0$,
\begin{equation}\label{modulusupperbound} 
k^{\beta\log n}_n(X)=O(\log n).    
\end{equation}
Subsequent works, culminating in \cite{Matousek-distortion,ABN2011}, extended the upper bound $k^{\alpha}_n(\ell_\infty)=O(\log n)$ to
$\alpha = \beta \log n$ for every constant $\beta>0$. In contrast, the strongest
lower bound on $k^\alpha_n(\ell_\infty)$ 
for $\alpha = \Theta(\log n)$ has remained:
\begin{equation}\label{eq:modulus LB}
    k^\alpha_n(\ell_\infty)=\Omega\Big(\frac{\log n}{\log\log\log n}\Big).
\end{equation}
Both \eqref{modulusupperbound} and \eqref{eq:modulus LB} extend to larger distortion values $\alpha$, but with a persisting gap. The current state of knowledge for the dimension reduction modulus with superlogarithmic distortion is: 

\begin{theorem}[State of the art \cite{Matousek-distortion,ABN2011,naor-snapshot}]
    For every fixed $\beta > 0$, there are some positive constants $c$ and $C$, depending only on $\beta$, such that the following holds for $n$ sufficiently large. For any $\alpha$ with $\beta \log n \le \alpha \le n$, 
    \begin{equation}\label{eq:state of the art}
           \frac{c \, \log n}{\log(\frac{\alpha \log \log n}{\log n})} \le  k^{\alpha}_n(\ell_\infty) \le \frac{C\, \log n}{\log(\frac{\alpha}{\log n}+1)}\,.
    \end{equation}
\end{theorem}
See \cref{sec:existing methods} for a sketch of the lower bound. We refer to \cite[formulae (12), (16)]{naor-snapshot} for a discussion of the asymptotics of $k_n^\alpha(\ell_\infty)$ with distortion $\alpha = O(\log n)$. A notable consequence towards our setting of these asymptotics is that $\alpha = \Theta(\log n)$ is the center of a critical window in which $k_n^{\alpha}(\ell_\infty)$ undergoes a rapid phase transition from power--law to sublogarithmic scaling. 

Closing the gap between upper and lower bounds in \eqref{eq:state of the art} has remained a long--standing problem. The case $\alpha = \Theta(\log n)$ is particularly important, appearing as Question 22 in Naor's ICM plenary lecture \cite{naor-snapshot}.

\begin{problem}[Naor \cite{naor-snapshot}]
    Determine the asymptotics of $k^\alpha_n(\ell_\infty)$ in the regime $\alpha=\Theta(\log n)$.
\end{problem}

Several perspectives motivate the particular interest in logarithmic distortion (equivalently, per our main result, logarithmic dimension). Beyond the intrinsic interest of phase transitions, there are the mentioned connections with the scalings present in the Johnson--Lindenstrauss lemma and Bourgain's embedding, especially regarding sharpness of the latter. 
Moreover, $\log n$ is a 
metric analog of ``dimension'' for (generic) $n$--point spaces: this value appears in various notions within the Ribe program, such as doubling dimension \cite[Remark 39]{naor-snapshot}) and sphericity \cite{FranklMaehara,AT2024}. Characterizing the distortion values $\alpha$ for which the bi--Lipschitz metric dimension proxy $k_n^\alpha(\ell_\infty)$ coincides with $\log n$ is thus natural. 

The regime of logarithmic distortion and dimension also has computational significance: algorithm design often uses low--distortion, low--dimensional embeddings of metric spaces into normed spaces where linear structure can be algorithmically exploited. If either dimension or distortion is sub--logarithmic, the other scales polynomially in the worst case. Thus logarithmic dimension and distortion offers a computational ``sweet spot'' in the trade-off between runtime complexity (scaling with dimension) and approximation accuracy (scaling with distortion). \\

Our main result is a sharp characterization of $k_n^\alpha(\ell_\infty)$, up to multiplicative constants, for $\alpha = \Omega(\log n)$. This completely resolves Naor's ICM problem and closes the long--standing gap in \eqref{eq:state of the art}.
\begin{theorem}\label{thm:main}
For any fixed $\beta > 0$ and $n$ sufficiently large, the following holds. For any distortion $\alpha$ satisfying $\beta \log n \leq \alpha\leq n$,
\[
    k^{\alpha}_n(\ell_\infty)=\Theta\bigg(\frac{\log n}{\log\big(\frac{\alpha}{\log n}+1\big)}\bigg).
\]
\end{theorem}

In particular, the upper bound in \eqref{eq:state of the art} is sharp. Our contribution is the matching lower bound.

\subsection{Graph embeddings}
The $n$--point metric space producing the lower bound in \cref{thm:main} is supplied by a random regular graph. Such graphs are common models of metric spaces with strong expansion properties, making them particularly useful for proving non--embedding results. Since metric embeddings of graphs is an important research direction in its own right, we offer a reformulation (and strengthening) of our main result:

\begin{theorem}[Bi--Lipschitz dimension reduction for graphs]\label{thm:graphs}
    For any fixed $\beta > 0$, $\Delta\geq 3$ and $n$ sufficiently large, so that $\Delta n$ is even, the following holds. Let $\alpha$ satisfy $\beta \log n \leq \alpha\leq n$ and let $G_n$ be a uniformly random $\Delta$--regular graph on $[n]$, conditioned on the event of being connected. Denote the set of normed spaces with dimension at most $d$ by $\mathcal{B}(d)$ and let $k := k_n^\alpha(\ell_\infty)$. Then, for some positive constants $c$, $C$, and $\ctail$ (only allowed to depend on $\beta$ and $\Delta$), 
    \begin{equation}\label{eq:graph embedding LB}
        \PP{\exists \,X \in \mathcal{B}(c\,k)\,:\, G_n ~\text{embeds into $X$ with distortion at most $\alpha$}} \le e^{-\ctail\,n \log n}\,,
    \end{equation}
    and 
    \begin{equation}\label{eq:graph embedding UB}
        \PP{\exists \,X \in \mathcal{B}(C\,k)\,:\, G_n ~\text{embeds into $X$ with distortion at most $\alpha$}} = 1\,.
    \end{equation}
\end{theorem}

\begin{remark}
    Consider the mechanisms that allow a graph to admit atypically low--dimensional embeddings. Even exponentially rare fluctuations in diameter, spectral gap, small subgraph counts, etc., cannot be mechanisms for atypically low--dimensional structure in graphs, by virtue of the super--exponential tail bound in \cref{thm:graphs}. 
\end{remark}

Beginning with the landmark work of Linial, London, and Rabinovich \cite{LLR95}, bi--Lipschitz embeddings of graphs into normed spaces have been actively studied in both deterministic and randomized settings, leading to remarkable algorithmic developments. Two seminal examples include algorithms for sparsest cut \cite{sparsestcut} and approximate nearest neighbor search \cite{ANNS-1,ANNS-2}, both generating enormous bodies of follow--up work. We refer to the classical surveys \cite{indyk-survey, indyk-survey2} and \cite{mat-book} for algorithmic and theoretical aspects of graph embeddings, respectively, and to \cite{Eskenazis2022} for modern developments. 

Various other notions of embeddings for graphs have also been studied, often motivated by algorithmic tasks such as clustering. Particularly relevant to the current article is \textit{geometric} embedding \cite{AT2024,ADTT2024}, which demands that vertices of a graph b are embedded at distance less than one if and only if they are adjacent in the graph. A closely analogous statement to \eqref{eq:graph embedding LB}, specialized to $\alpha = \Theta(\log n)$, was shown in \cite{AT2024}: with probability $1 - e^{-\Omega(n \log n)}$, a random three--regular graph does not admit a geometric embedding into any normed space of sub--logarithmic dimension.

\subsection{Technical overview}
Since the upper bound in \eqref{eq:state of the art} contains \eqref{eq:graph embedding UB}, establishing \eqref{eq:graph embedding LB} suffices for proving \cref{thm:graphs} and thus \cref{thm:main}. We begin by surveying existing methods. For simplicity, consider only the regime $\alpha = \Theta(\log n)$.

\subsubsection{Existing methods} \label{sec:existing methods}
A naive volumetric argument proceeds as follows. The diameter of a typical random $3$--regular graph $G$ is $\Theta(\log n)$. Assume there is a mapping $f$ from $G$ to a $d$--dimensional normed space $X$ incurring distortion $\alpha$; we seek to bound $d$ in terms of $\alpha$. By linearity of $X$ and scale--invariance of distortion, we may assume $f$ is an \textit{expansion} map (i.e., $d_G(x,y) \le \|f(x)-f(y)\|_X$ for all $x,y$). Then the diameter of $f$ is at most of order $\alpha \log(n)$ in $X$. By $3$--regularity of $G$, no open $X$--ball of radius $2$ centered at some $f(x)$, can contain more than four points of the image of $f$. Duality between packing and covering implies the existence of at least $n/4$ disjoint $X$--balls of radius $1$ within the $X$--ball of radius $\alpha \log(n)+1$. Volumetric considerations imply:
\[
    \frac{n}{4} 
    \leq (1+\alpha \log n)^d\,, ~ \text{ and thus } d \gtrsim \frac{\log n}{\log( \alpha \log n)} \approx \frac{\log n}{\log \log n}\,.
\]
That is, embeddings of a random graph must satisfy conflicting confining and repulsive constraints, due to diameter and $3$--regularity respectively, that can only be balanced in low dimensions. The optimal lower bound turns out to be $\log n$; our task is to remove the $\log \log n$ deficit.

The theory of nonlinear spectral calculus offers significant improvement. The confinement of the image of $f$ to a ball of radius $\alpha \log n$, which is the source of the $\log\log n$ deficit, comes from a \textit{worst case} bound on distances between embedded vertices. For specific choices of normed space $X$, random graphs are known to satisfy a nonlinear Poincar\'e inequality, which implies the \textit{average case} bound that most vertices are only at distance $\alpha$ from each other. This improvement yields the optimal lower bound $d \gtrsim \log n$ for these structured spaces $X$, as well as $d \gtrsim \log n/\log\log\log n$ for generic spaces $X$. This is exactly the lower bound on $k_n^\alpha(\ell_\infty)$ in \eqref{eq:modulus LB} and \eqref{eq:state of the art}. We refer to \cite{Naor2017} and \cite[Section 2]{AT2024}\footnote{A different notion of embedding is considered here, but obvious changes extend the discussion to bi--Lipschitz embeddings.} for definitions and further exposition.

\subsubsection{Overall approach: the union bound} We view a $d$--dimensional normed space as $(\RR^d,\|\cdot\|_X)$, i.e., as $\mathbb{R}^d$ with a different unit ball. Our approach is a union bound over all possible (expansive) $\alpha$--embeddings of $\Delta$--regular graphs into $\RR^d$, for each possible norm. More precisely, our main task is the construction of a net $\mathcal{N}$ with the following properties:
\begin{itemize}
    \item The net $\net \subset (\RR^k)^n$ has cardinality at most $e^{n \log n}$. 
    \item Each potential embedding $\vx = (x_1,\dots,x_n) \in (\RR^k)^n$ of a $\Delta$--regular graph into some $k$--dimensional normed space $X$ is close (distortion less than two) to some vector $\hx \in \mathcal{N}$.
    \item Each $\hx \in \mathcal{N}$ is a distortion  $2\alpha$--embedding for less than an $e^{-1.1\,n \log n}$--fraction of all $\Delta$--regular graphs on $n$ vertices. 
\end{itemize}
The existence of such a net will imply the result.

\subsubsection{Net of the Banach--Mazur compactum}
The set of normed spaces can be equipped with a natural metric in terms of distortion, resulting in the so--called Banach--Mazur compactum (defined in the next section). With this metric, it becomes well--defined to seek a net on the Banach--Mazur compactum. By the standard equivalences between normed spaces, unit balls, and symmetric convex bodies, constructing a net for the Banach--Mazur compactum is equivalent to finding low--complexity approximations of convex bodies. This is an extremely well--studied task in computational and convex geometry. An existing result of Pisier \cite{pisier} directly supplies the desired net. 

\subsubsection{Sparse tuples}

We introduce the key notion of $\lambda$--sparsity: an $n$--tuple of points $\vx \in (\RR^d,\|\cdot\|_X)^n$ is $\lambda$--sparse if fewer than $n^{\eps}$ points lie in any ball of radius $\lambda$, for some small universal constant $\eps > 0$. By volumetric considerations, tuples which are \textit{not} $\lambda$--sparse cannot correspond to $\alpha$--embeddings of a $\Delta$--regular graph, for appropriate choices of $\lambda$ in terms of $(\alpha,d)$. The advantage of restricting to sparse tuples is: if $\vx = (x_1,\dots,x_n)$ is both $\alpha$--sparse and an $\alpha$--embedding of a graph $G = ([n],E_G)$, then for each $i$, 
\begin{equation}\label{eq:edge - order statistics}
    E_G \subset \{\{i,j\}\,:\, x_j \text{ is among the $n^\eps$--closest points to $x_i$}\}.
\end{equation}

\subsubsection{Adaptive net}

For a fixed $d$--dimensional normed space $X$, it remains to construct a low--cardinality net of all possible embeddings of $\Delta$--regular graphs. Specifically, we need a net on the set of sparse tuples. For $d = \Theta(\log n)$, a minimal net of all $n$--tuples in $d$--dimensions---say with precision $r$, and restricted to the diameter $D$ ball---has cardinality $\exp\{\Omega(n \log(n)\, \log(D/r))\}$. Such a net is too large for $D/r$ diverging, and too coarse to be useful for $D/r$ fixed. 

We remedy this issue by using an efficient \textit{adaptive} net $\net$. A similar net was used in \cite{AT2024} within the context of geometric embeddings of random graphs. We sketch the key ideas for constructing $\net$ (and the associated ``discretization'' function $f:X^n \to \net$):
\begin{enumerate}
    \item In view of \eqref{eq:edge - order statistics}, we only require $f$ to preserve: for each $i$, the labels of the $n^\eps$ closest points $x_j$ to $x_i$.  
    \item Instead of taking a standard net, which is intractable, we build an \textit{adaptive} net. Consider the toy problem of discretizing a \textit{fixed} $\vx$. Since we only seek to preserve order statistics of distances, $\net$ can be coarse in regions of $X$ containing few points of $\vx$.
    \item To quantify where $\vx$ is sparse or dense in $X$, randomly select indices $\seeds \subset [n]$. The corresponding points $(x_s)_{s \in \seeds}$ are called \textit{seeds}. The distribution of seeds naturally inherits the geometry of $\vx$: the expected number of seeds in a region of $X$ is proportional to the density of $\vx$. 
    \item The key gain is that the number of seeds can be tiny, $|\seeds| = n^{1-\eps} \ll n$, and $\seeds$ will still capture the geometry of $\vx$. For the toy problem of discretizing this particular $\vx$, take $\net$ as a union of small, standard nets centered around each seed, with diameter and precision proportional to the local density of $\vx$ around that seed. The resulting patchwork of local nets provides the desired construction: $|\net|$ is negligible compared to our goal, and projection of $\vx$ onto $\net$ preserves order statistics of distances. 
    \item Moving from the toy problem to the full construction, $\net$ obviously cannot depend on the vector $\vx$ we seek to discretize. Instead, $\net$ is constructed by first taking a standard net on $X^{|\seeds|}$ for $\seeds$ and then appending ``local nets'' (of all possible diameters and precisions, along a dyadic sequence) around each possible seed location. 
\end{enumerate}

\subsection{Asymptotic notation}
All asymptotics in this paper are with respect to $n$ (the size of the metric space) diverging.
We write $f=O(g)$, $f=o(g)$
whenever $f$ remains bounded with respect to $g$ (respectively, vanishes) as $n\to \infty$. Further, $f=\Omega(g)$ whenever $g=O(f)$, and $f=\Theta(g)$
if simultaneously $f=O(g)$ and $f=\Omega(g)$.

\bigskip

{\bf Acknowledgment.} 
We thank Assaf Naor for the suggestion the application of the
discretization scheme from \cite{AT2024} to
the study of bi--Lipschitz embeddings, and in particular towards estimating $k^\alpha_n(\ell_\infty)$. The second named author is partially supported by NSF grant DMS 2331037.

\section{Preliminaries}

\subsection{Useful facts on random graphs}

\begin{definition}
Given $\Delta\geq 3$, denote by $\dreg$
the set of simple $\Delta$--regular graphs
on $[n]$.
We further write $G\sim \dreg$ for a random graph uniformly
distributed on $\dreg$.
\end{definition}

\begin{definition}
Let $G(n,m)$ be the collection of all simple graphs on $[n]$
with $m$ edges. We write
$G \sim G(n,m)$ for a random graph uniformly
distributed on $G(n,m)$.
\end{definition}

\begin{lemma}[\cite{wormald}]\label{lemma:connectivity}    Let $G$ be drawn uniformly from $\dreg$ for any $\Delta := \Delta(n) \ge 3$ with $\Delta n /2$ an integer.  Then the
graph $G$ is connected asymptotically almost surely. 
\end{lemma}

\begin{lemma}[\cite{enumerate}]\label{lemma:enumerate}
    Let $\Delta= n^{o(1)}$ and $\Delta n /2$ be an integer. Then
    \[
        |\dreg| = (1 + o(1))\,e^{1 - \frac{\Delta^2}{4}} \frac{(\Delta n)!}{(\Delta n /2 )!\,2^{\Delta n /2}(\Delta !)^n}.
    \]
\end{lemma}

\begin{corollary}\label{cor:contiguity}
    Let $\Delta n /2$ be an integer, and let $G$ be drawn uniformly from
    $G(n,\Delta n / 2)$. If $\Delta = n^{\ao{1}}$,
    \[
        \PP{G \in \dreg} = e^{-o(n \Delta \log n)}\,.
    \]
\end{corollary}

We refer, for example, to \cite{AT2024} for a proof of the
above corollary.

\subsection{The Banach--Mazur compactum}

Given a positive integer $d$, denote by $\mathcal{B}_d$
the collection of all $d$--dimensional normed spaces 
(identified up to isometry), endowed with a submultiplicative
metric
$$
d_{BM}(X,Y):=\inf\limits_{T:X\to Y}\|T\|_{X\to Y}
\,\|T^{-1}\|_{Y\to X},\quad X,Y\in \mathcal{B}_d,
$$
where the infimum is taken over all invertible linear operators $T$,
and where $\|T\|_{X\to Y}$ and $\|T^{-1}\|_{Y\to X}$ are standard operator
norms for $T:X\to Y$ and $T^{-1}:Y\to X$, respectively.
We refer to
\cite{NTJ} for a comprehensive discussion of the concept
and its fundamental role in the study of finite-dimensional
normed spaces.
In this note, we will use the following 
metric entropy estimate on the Banach--Mazur compactum: 
\begin{lemma}[\cite{pisier}]\label{lemma:BM-entropy}
    There is some positive universal constant $C_{\text{BM}}$ (satisfying $C_{\text{BM}} \le 10$) and a $2$-net $\netB(d)$ in $\mathcal{B}_d$ such that:
    \[
        |\netB(d)| \le \cE{e^{d\,C_{\text{BM}}}}\,.
    \]
\end{lemma}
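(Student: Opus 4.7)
\emph{Plan.} I would follow the classical two-step proof of this metric entropy bound. By John's theorem, $d_{BM}(X,\ell_2^d) \le \sqrt{d}$ for every $X \in \mathcal{B}_d$, so each Banach--Mazur class admits a representative whose unit ball $K \subset \RR^d$ satisfies $B_2^d \subset K \subset \sqrt{d}\,B_2^d$. Let $\mathcal{K}_d$ denote this family of normalized bodies, endowed with the multiplicative Hausdorff pseudometric $\rho(K,K') := \inf\{\lambda \ge 1 : K \subset \lambda K' \text{ and } K' \subset \lambda K\}$. Taking $T = \mathrm{id}$ in the definition of $d_{BM}$ and optimizing over scalar rescalings of the identity gives $d_{BM} \le \rho^2$ between the two normed spaces associated with $K$ and $K'$, so it suffices to construct a $\sqrt{2}$-net for $(\mathcal{K}_d, \rho)$ of cardinality at most $\exp(O(d))$.

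For the covering step, I would encode each $K \in \mathcal{K}_d$ by a dyadic tuple $f_K := \bigl(\lfloor \log_2 \|u\|_K \rfloor\bigr)_{u \in \mathcal{N}}$, where $\mathcal{N} \subset S^{d-1}$ is a $\delta$-net with $|\mathcal{N}| \le (3/\delta)^d$ and $\|\cdot\|_K$ is the Minkowski gauge of $K$. Since $\|u\|_K \in [1/\sqrt{d},\,1]$ for $u \in S^{d-1}$, each coordinate of $f_K$ takes $O(\log d)$ values. Because $\|\cdot\|_K$ is $1$-Lipschitz on $\RR^d$ (as $B_2^d \subset K$), two bodies sharing $f_K$ have gauges within a constant multiplicative factor on all of $S^{d-1}$ provided $\delta$ is sufficiently small; this translates to $\rho(K,K') \le \sqrt{2}$ for bodies with identical encodings, so a $2$-net in $\mathcal{B}_d$ can be chosen by picking one representative per realized encoding.

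The main obstacle is that naive parameter choices give a count $(\log d)^{(3/\delta)^d}$ that exceeds $\exp(O(d))$: propagating multiplicative closeness through $1$-Lipschitz continuity forces $\delta \lesssim 1/\sqrt{d}$, yielding only $\exp(O(d \log d))$. Reaching the sharp bound $\exp(O(d))$ requires a more refined argument --- for instance, polyhedral approximation by polytopes with only $O(d)$ facets (whose normals are drawn from $\mathcal{N}$ and whose offsets are dyadically quantized), or a counting argument that exploits the $GL_d$-symmetry of $\mathcal{B}_d$ (the effective dimension of the quotient being $\Theta(d)$) to absorb the logarithmic factor. This delicate optimization is carried out in Pisier's book and produces the universal constant $C_{\text{BM}} \le 10$.
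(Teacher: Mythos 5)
This lemma is not proved in the paper at all: it is imported as a black box from Pisier's book, so there is no internal argument to measure your sketch against. Judged on its own terms, your sketch has one critical misreading and two non-viable repairs. The bound $\cE{e^{d\,C_{\text{BM}}}}$ is \emph{doubly} exponential in $d$ (an exponential of $e^{d\,C_{\text{BM}}}$); this is the form in which the entropy of the Banach--Mazur compactum is actually known, and it is all that the application in Theorem~\ref{thm:bilipschitz} requires, since there $d\le C_{\text{\tiny{\ref{prop:non-embedding}}}}^{-1}\log n$ makes $\exp(e^{C_{\text{BM}}d})\le\exp(n^{1/100})$ negligible against $n^{M/2}$. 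You instead set yourself the target of a $\sqrt2$-net of cardinality $\exp(O(d))$, a vastly stronger statement that is not what the lemma asserts, is not in Pisier, and is not delivered by either of your proposed ``delicate optimizations'': a general body in John's position (even $B_2^d$ itself) cannot be approximated within a factor $\sqrt{2}$ by a polytope with $O(d)$ facets --- exponentially many are needed --- and the $GL_d$-quotient heuristic fails because $\dim GL_d=d^2$ while the space of convex bodies modulo $GL_d$ remains infinite-dimensional, so there is no ``effective dimension $\Theta(d)$.''

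Measured against the correct, doubly exponential target, your dyadic encoding of the gauge on a $\delta$-net of $S^{d-1}$ is the right first move, but the loss you correctly identify is still fatal here: with $\delta\asymp1/\sqrt{d}$ (forced by $1$-Lipschitzness of $\|\cdot\|_K$ together with $\|u\|_K\ge1/\sqrt d$) your count is $\exp\big(e^{O(d\log d)}\big)$, and that extra $\log d$ in the inner exponent matters for this paper: with $d=\Theta(\log n)$ it produces $\exp\big(n^{\Theta(\log\log n)}\big)$, which is no longer dominated by $n^{-M/2}$ in the final computation of Theorem~\ref{thm:bilipschitz}. The standard way to remove it (and the route in Pisier) is to net each body $K$ in its \emph{own} gauge metric: a $\delta$-net of $\partial K$ with respect to $\|\cdot\|_K$ has cardinality $(1+2/\delta)^d=e^{O(d)}$ independently of the eccentricity of $K$, and its convex hull recovers $(1-O(\delta))K$. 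One then discretizes the locations of these $e^{O(d)}$ points inside a fixed ambient net of $\sqrt{d}\,B_2^d$ of cardinality $e^{\mathrm{poly}(d)}$; the number of ways to choose $e^{O(d)}$ points from $e^{\mathrm{poly}(d)}$ candidates is $\exp\big(e^{O(d)}\cdot O(d\log d)\big)=\exp\big(e^{O(d)}\big)$, absorbing the logarithm additively into the inner exponent rather than multiplicatively. Without this device (or an equivalent one), your argument does not reach the stated bound.
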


\begin{remark}
Note that, from the above lemma,
it follows that as long as the dimension $d$ satisfies $d\leq \frac{\log n}{2\,C_{\text{BM}}}$, there is a $2$--net $\netB(d)$ on the Banach-Mazur compactum $\mathcal{B}_d$ of size at most $\exp(\sqrt{n})$.
\end{remark}

\section{Construction of the discretization scheme}\label{sec:discretization}

The goal of this section is to produce a discretization $\hx:X^n \to \net$, where $X$ is a given finite-dimensional normed space and $\net$ is a discrete subset of the Cartesian product $X^n$ having both small cardinality and ``sufficient density.'' The construction presented below can be viewed as a  version of the classical net argument with advanced features provided through multiscaling and seeding. 
As we noted in the introduction, a related discretization scheme
first appeared in \cite{AT2024} to deal with {\it geometrical} embeddings of random graphs. Whereas the discretization presented here is not identical to the one in \cite{AT2024}, many similarities exist. For that reason, proofs of certain claims closely matching corresponding statements from \cite{AT2024} are moved to the appendix.

\medskip

{\bf Notation.}
For the remainder of this article, $\eps$ and $c_0$ are some fixed parameters, independent of $X$ and $n$, that are assumed to be sufficiently small. That is, $\eps \le \eps'$ and $c_0 \le c_{0}'$ for some positive universal constants $\eps'$ and $c_0'$ which do not depend on $X$ or $n$. Without making effort to optimize, the following choices suffice:
\begin{align*}
    \eps' &=  .1  \\
    c_0' &= .01 \\
    \ctail &= \eps/20
\end{align*}
We also require an integer parameter $L\in [1,n^{1-\eps}]$, which will be fixed in \cref{thm:bilipschitz}. As $L$ and $X$ will only be varied in \cref{sec:bilipschitz}, one can view $L$ and $X$ as fixed parameters in \cref{sec:discretization};  dependencies on $L$ and $X$ will be suppressed wherever there is no ambiguity in order to lighten notation. 
Below, ``$n$ sufficiently large'' means $n \ge n_0$ for some function $n_0 := n_0(\eps, c_0)$. We emphasize that $n_0$ does not depend on the underlying normed space $X$.

A $d$--dimensional normed space $X$ will be always be viewed as the space $\RR^d$ endowed with a norm $\|\cdot\|_X$. The dimension $d$ of the space will always be assumed to satisfy
\begin{equation}\label{dcondition}
1\leq d\leq n^{\eps/4}.    
\end{equation}
The unit ball in $X$ will be denoted by $B_X$. More generally, a ball of radius $r$ centered at a point $y\in X$ will be denoted by $B_X(y,r)$.
Given a point $y\in X$ and a non-empty closed subset $A$ of $X$,
the $\|\cdot\|_X$--projection of $y$ onto $A$ is any point in $A$ with the minimal $\|\cdot\|_X$--distance to $y$.
Whenever projections of points onto subsets are not uniquely defined, a representative will be chosen arbitrarily. In what follows, $\vx$ will always denote an $n$-tuple of points $(x_1,\dots,x_n) \in (\RR^d)^n$. Occasionally, we will view $\vx$ as a multiset (i.e disregard the ordering).

\bigskip

\begin{definition}[Sparse $n$--tuples]\label{def:sparse}
    Let $\|\cdot\|_X$ be any norm in $\RR^d$, and let $\lambda>0$ be any parameter (possibly depending on $n$). Say
    that a tuple of points $\vx$ is {\it $\lambda$-sparse} if, for all $i\in[n]$, $\|x_i - x_j\|_X\le \lambda$ for less than $n^{\varepsilon}$ values of $j \in [n]\setminus \cb{i}$.
\end{definition}

As a rough interpretation of the above notion, an $n$--tuple is $\lambda$-sparse if for a vast majority of pairs of vectors, the distance between the vectors is greater than $\lambda$. In what follows, it will be crucial to us that the property is essentially preserved even if the threshold $\lambda$ is multiplied by a constant. Specifically, the parameter $L$ mentioned in passing above, is defined later in such a way that in any ball of radius $72\lambda$, there are at most $Ln^{\eps}$ other vectors (see 
\cref{def:scale} and \cref{lemma:scale-lb} below).

\begin{definition}[Domain]
Assume that $X$ is a normed space, and let $D,\lambda>0$ be parameters
satisfying
\begin{equation}\label{aksdjfnalkfnflks}
n^2\geq D\geq 2\lambda\geq 1.
\end{equation}
Let the domain $\tuples(D,\lambda)=\tuples(D,\lambda,X)$ of the discretization function be given by
\begin{equation}\label{eq:domain}
    \tuples(D,\lambda)=\tuples(D,\lambda,X) := \cb{\vx \in (B_{X}(0;\, D))^n\,:\, \vx \text{ is $\lambda$-sparse in $X$}}\,.
\end{equation}
\end{definition}

The discretization of $X$ will be based on a multiscale net-argument with
seeding.
Recall that an {\it $r$-net} on a set $A \subset \RR^{d}$ in $X=(\RR^{d},\|\cdot\|_X)$  is a collection of points $(y_i)_i$ so that for all $x \in A$, there exists $y_i$ with $\|x - y_i\|_X \le r$. We will always assume that the points $y_i$ are contained in $A$ for convenience. We introduce the following notation: 
\begin{definition}[Simple nets]
    Given $r>0$ and $y\in X$, define $\net(y, r; c_0r)$ as an (arbitrary) $c_0r$-net in $B_X(y; r)$ of minimum cardinality. Further, given a parameter $D>0$, let $\net_0=\net_0(D)$ denote an arbitrary $1$-net in $B_X(0; D)$ of minimum cardinality.
\end{definition}

\begin{definition}[Multiscale net]\label{def:mulitscale-net}
    For every integer $\ell$ we use the shorthand 
    \[
        r_\ell := 2^{\ell}\,.
    \]
    Let $X$ be a normed space, and let $D,\lambda>0$ be parameters satisfying \eqref{aksdjfnalkfnflks}. Define the {\it multiscale net} $\net=\net(D,\lambda)$ as:
    \[
        \net := \net_0(D) \cup \pa{ \bigcup_{y \in \net_0(D)}\; \bigcup_{\ell = \lfloor \log_2\lambda\rfloor+1}^{\lceil\log_2 (2D)\rceil} \net(y,\, r_\ell; c_0 r_\ell) } \,.
    \]
\end{definition}
In what follows, we will use points from $\net$ to approximate locations
of vertices in a graph embedding into $X$. A key technical idea is that it will suffice to roughly maintain the list of closest neighbors to each vertex under this approximate embedding, rather than attempting to maintain the actual pairwise distances between all vertices. This is a significant source of efficiency in the argument. Towards making this precise, we introduce the following definition.

\begin{definition}[Scale of separation]\label{def:scale}
    Given $\vx = (x_1,\dots,x_n) \in (\RR^{d})^n$ and a parameter
    $L\in [1,n^{1-\eps}]$,
    define the \textit{local scale of separation for $x_i$} by:
    \[
        \ell_i := \ell_i(\vx,X) = \min\cb{\ell \in \mathbb{Z}\,:\, \ba{\cb{j\in [n]\,:\, \|x_i-x_j\|_X \le r_\ell }} \ge L\,n^{\eps}}\,, \quad 
        1\leq i\leq n\,.
    \]
    Informally, for every $i$, the local scale of separation for $x_i$ is the logarithm of the radius of a ball in $X$ centered at $x_i$ containing about $n^{\eps}$ points $x_j$. We further write $\ell(\vx)=\ell(\vx,X)$ for the tuple $(\ell_i)_{i=1}^n$.
\end{definition}

\begin{remark}\label{boundsonell}
Note that whenever $\vx$ is $\lambda$-sparse then necessarily $2^{\ell_i}> \lambda$ for all $i \in [n]$.
Moreover, whenever $\vx\in (B_{X}(0,r))^n$ for some $r>0$, we must have $\ell_i\leq \lceil\log_2(2r)\rceil$ for all $i$.
In particular, for every $\vx=(x_i)_{i=1}^n\in\tuples(D,\lambda)$
    we have $\ell_i\in \{\lfloor \log_2\lambda\rfloor+1,\dots, \lceil\log_2(2D)\rceil\}$
    for all $i\leq n$.
\end{remark}

In the next proposition we define {\it seeds}, which are
small-cardinality subsets of $\vx$ that play a key role in the discretization scheme. A tuple $\vx$ will be embedded into simple (local) nets centered at the seeds.

\begin{proposition}[Existence of good seeds]\label{prop:seeds}
    Let $n$ be sufficiently large (independent of $X$). For any $n$--tuple $\vx$, and for each integer $\ell\geq 0$, there exists a multiset $\seeds_\ell := \seeds_\ell(\vx) \subset \vx$, with $|S_\ell| =\lfloor n^{1-\eps}\log^2 n\rfloor$ and the following property. For any component $x_i$ of $\vx$ with $\ell_i = \ell$, there exists $y \in \seeds_\ell$ with 
    \[
        \|y - x_i \|_X \le r_{\ell_i}\,.
    \]
\end{proposition}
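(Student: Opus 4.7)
The plan is to give a short probabilistic argument: a uniformly random sub-collection of the components of $\vx$ of the prescribed cardinality covers every ``scale~$\ell$'' point of $\vx$ with overwhelming probability.

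First, set $I_\ell := \{i \in [n] : \ell_i(\vx,X) = \ell\}$. If $I_\ell = \emptyset$ the conclusion is vacuous and any submultiset of $\vx$ of the prescribed size serves as $\seeds_\ell$. Otherwise, for each $i \in I_\ell$ define the target set
\[
T_i := \{j \in [n] : \|x_i - x_j\|_X \leq r_\ell\}.
\]
By \cref{def:scale} together with the assumption $L \geq 1$, we have $|T_i| \geq L\,n^\eps \geq n^\eps$. In words, every scale-$\ell$ point has at least $n^\eps$ ``close witnesses'' among the components of $\vx$, so a random component of $\vx$ is a valid ``cover'' for $x_i$ with probability at least $n^{\eps-1}$.

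Next, sample a uniformly random $k$-element subset $J \subset [n]$ with $k := \lfloor n^{1-\eps}\log^2 n \rfloor$, and put $\seeds_\ell := (x_{j})_{j\in J}$, which is a submultiset of $\vx$ of size exactly $k$. For a fixed $i \in I_\ell$, since $r_{\ell_i} = r_\ell$, the probability that no $j \in J$ lies in $T_i$ is
\[
\frac{\binom{n - |T_i|}{k}}{\binom{n}{k}} \;\leq\; \bigl(1 - |T_i|/n\bigr)^k \;\leq\; (1 - n^{\eps-1})^k \;\leq\; \exp\bigl(-\tfrac{1}{2}\log^2 n\bigr)
\]
for $n$ sufficiently large. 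A union bound over the $|I_\ell| \leq n$ values of $i$ bounds the total failure probability by $n\,\exp(-\tfrac{1}{2}\log^2 n) = o(1)$, so some realization of $J$ has the required covering property.

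I do not anticipate any serious obstacle here; the whole argument is a single union bound on top of a transparent probabilistic construction. The two things worth checking are (i) that $k \cdot n^{\eps - 1} = \Omega(\log^2 n)$ gives enough concentration to beat a union bound over at most $n$ indices, which it does, and (ii) that the threshold $n_0$ may be chosen independent of $X$, which holds because the argument uses only the cardinality lower bound $|T_i| \geq n^\eps$ from \cref{def:scale} and not any finer geometric features of $X$.
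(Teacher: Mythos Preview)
Your proof is correct and follows essentially the same probabilistic-method argument as the paper: draw roughly $n^{1-\eps}\log^2 n$ random components of $\vx$, use the lower bound $|T_i|\ge n^\eps$ from \cref{def:scale} to get a per-index failure probability of at most $\exp(-\Omega(\log^2 n))$, and union-bound over $i$. The only cosmetic difference is that the paper samples with replacement (independent uniform picks) while you sample a random $k$-subset without replacement; both yield the same bound and the same conclusion.
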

\begin{proof}
    Fix $\vx$ and $\ell$. Let $\seeds_\ell$ be the collection of $\lfloor n^{1-\eps}\log^2 n\rfloor$ points selected independently uniformly at random from $\vx$ (viewed as a multiset). Let $z \sim \mathrm{unif}(\vx)$. For all $x_i \in \vx$, we have:   
    \begin{align*}
        \PP{\bigcap_{y \in S_\ell} \cb{\|y - x_i \|_X > r_\ell}} = \PP{\|z - x_i \|_X > r_\ell}^{\lfloor n^{1-\eps}\log^2 n\rfloor} \le \pa{1 - \frac{n^{\eps}}{n}}^{\lfloor n^{1-\eps}\log^2 n\rfloor} &= n^{-\omega(1)} \,.
    \end{align*}
    Taking a union bound over the $n$ possible values of $i$, it easily follows that with high probability (in particular, positive probability), our random selection of $\seeds_\ell$ has the desired property. Thus, by the probabilistic method, there must deterministically be some satisfactory realization of $\seeds_\ell$.
\end{proof}

\begin{definition}[Discretization]
    Let $D,\lambda>0$ be parameters satisfying \eqref{aksdjfnalkfnflks}, and let $L\in [1,n^{1-\eps}]$. The \textit{discretization scheme} is the map \[
        \hx := \hx(\vx): \tuples(D,\lambda,X) \to \net(D,\lambda)^n\,,
    \]
    constructed as follows. For each $\ell \in \{\lfloor \log_2\lambda\rfloor+1,\dots, \lceil\log_2(2D)\rceil\}$, construct $S_\ell$, as defined in \cref{prop:seeds}. Let $\hat \seeds_{\ell}$ denote a multiset which is the $\|\cdot\|_{X}$-projection of $\seeds_\ell$ onto $\net_0$. 
    Then, for each $i \in [n]$:
    \begin{enumerate}
        \item Let $\hat s_i$ be the $\|\cdot\|_{X}$-projection of $x_i$ onto $\hat \seeds_{\ell_i}$. 
        \item Let $\hat x_i$ be the $\|\cdot\|_{X}$-projection of $x_i$ onto $\net(\hat s_i, r_{\ell_i}; c_0 r_{\ell_i}) \subset \net$. 
    \end{enumerate}
    Define $\hx := (\hat x_1,\dots, \hat x_n) \in \net^{n}$. 
\end{definition}

\begin{remark}\label{distxitohatxi}
By definition of $S_{\ell_i}$, the distance
from $x_i$ to $S_{\ell_i}$ is at most $r_{\ell_i}$.
Since $\net_0$ is a $1$--net, the distance from $\hat \seeds_{\ell_i}$
to $x_i$ is at most $r_{\ell_i}+1$. Thus,
$\|x_i-\hat s_i\|_X\leq r_{\ell_i}+1$, $i\leq n$.
In turn, this implies $\|x_i-\hat x_i\|_X
\leq c_0 r_{\ell_i}+1$ for all $i\leq n$.
\end{remark}

Note that the above construction 
depends on the underlying normed space $X$. We collect some properties of this discretization.  

\begin{lemma}\label{lemma:discretization} Let $n$ be sufficiently large (independently of $X$), let $d$ satisfy \eqref{dcondition}, let the parameters $D,\lambda$ satisfy \eqref{aksdjfnalkfnflks}, and let $L\in [1,n^{1-\eps}]$.
Then
    \begin{enumerate}
        \item (Cardinality estimate)
        \[
            \ba{\cb{(\hx(\vx),\, \ell(\vx))\,:\, \vx \in \tuples(D,\lambda,X) }} \le 
    \big(\pa{3/c_0}^d\big)^n\cdot \big(n^{1-\eps}\log^4 n\big)^n
    \cdot n^{n^{1-\eps/2}}.
        \]

        \item (Upper bound on expansion of distances)
        Let $\vx \in \tuples(D,\lambda,X)$ and $\hx := \hx(\vx)$. For all $t>0$ and all $(i,j) \in [n]^2$, 
        \[
            \|\hat x_i - \hat x_j\|_X \le (1+2c_0)\|x_i-x_j\|_X +3c_0 r_{\ell_i} + 2.
        \]

        \item (Preservation of local scales) Let $\vx \in \tuples(D,\lambda,X)$ and $\hx := \hx(\vx)$. For all $i \in [n]$, 
        \[
            \ba{\cb{j \in [n]\,:\, \|\hat x_i - \hat x_j\|_X \le \frac{4}{9}\,r_{\ell_{i}}-2}} \le L\,n^{\eps}\,.
        \]
    \end{enumerate}
\end{lemma}

In view of strong similarities between the proof of the above lemma
and of the corresponding statement in \cite{AT2024},
the proof of Lemma~\ref{lemma:discretization} is presented in the appendix.

\begin{lemma}[Lower-bound on scales of separation]\label{lemma:scale-lb}
Let $D,\lambda>0$ be parameters satisfying \eqref{aksdjfnalkfnflks},
let $L\in[1,n^{1-\eps}]$,
and assume additionally that
$$
300^d\leq \frac{L}{2}.
$$
Then for all $\vx \in \tuples(D,\lambda,X)$ and all $i \in [n]$,
\[
    \ba{\cb{j \in [n] \,:\, \|x_i - x_j\|_X \le 72\lambda}  } < L\,n^{\eps}\,.
\]
In particular, $r_{\ell_i} \geq 72\lambda$ for all $i \in [n]$. 
\end{lemma}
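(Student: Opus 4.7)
The plan is a standard volumetric packing argument leveraging $\lambda$-sparsity. Fix $\vx \in \tuples(D,\lambda,X)$ and $i \in [n]$, and set
\[
I_i := \{j \in [n] : \|x_i - x_j\|_X \leq 72\lambda\}.
\]
The goal is to show $|I_i| < L\,n^{\eps}$. From this, the ``in particular'' claim follows immediately: if one had $r_{\ell_i} < 72\lambda$ then by monotonicity $|\{j : \|x_i - x_j\|_X \leq r_{\ell_i}\}| \leq |I_i| < L\,n^{\eps}$, contradicting the defining property of $\ell_i$ (which requires this count to be at least $L\,n^{\eps}$).

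First, I would greedily construct a maximal subset $T \subset \{x_j : j \in I_i\}$ whose points are pairwise at $X$-distance strictly greater than $\lambda$; by maximality, every $x_j$ with $j \in I_i$ lies within $X$-distance $\lambda$ of some $y \in T$. To bound $|T|$, observe that because $X = (\RR^d,\|\cdot\|_X)$ all $\|\cdot\|_X$-balls are affine dilates of a single convex body, so Lebesgue volume scales as the $d$-th power of the radius. The closed balls $\{B_X(y,\lambda/2) : y \in T\}$ are pairwise disjoint (by strict separation) and contained in $B_X(x_i,\,72\lambda + \lambda/2)$, which yields
\[
|T| \leq \left(\frac{72\lambda + \lambda/2}{\lambda/2}\right)^{d} = 145^d \leq 300^d \leq L/2.
\]

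Next, invoke $\lambda$-sparsity on each $y \in T \subset \vx$: the number of indices $j \in [n]$ with $\|y - x_j\|_X \leq \lambda$ is at most $n^\eps$ (strictly fewer than $n^\eps$ indices distinct from that of $y$, plus $y$ itself). Since every $j \in I_i$ has $x_j$ within $X$-distance $\lambda$ of some $y \in T$, summing (with repetition) over $T$ gives
\[
|I_i| \leq \sum_{y \in T} |\{j \in [n] : \|y - x_j\|_X \leq \lambda\}| \leq |T|\cdot n^\eps \leq \frac{L}{2}\,n^\eps < L\,n^\eps,
\]
as required.

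There is no substantive obstacle: the argument is a textbook packing-and-covering calculation, and the hypothesis $300^d \leq L/2$ is manufactured to leave comfortable slack over the tight constant $145^d$. The only mild point to watch is using a genuinely strict separation $>\lambda$ so that the closed $(\lambda/2)$-balls are disjoint, and being careful that sparsity gives an index count rather than a point count (relevant if $\vx$ is viewed as a multiset).
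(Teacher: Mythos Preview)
Your proof is correct and takes essentially the same volumetric packing-plus-sparsity approach as the paper. The only cosmetic difference is that you build a $\lambda$-separated packing $T$ directly, whereas the paper lower-bounds a covering number and then invokes packing--covering duality; the slight off-by-one in the per-$y$ sparsity count (safer bound $n^\eps+1$ rather than $n^\eps$) is harmless since $(L/2)(n^\eps+1)<Ln^\eps$ once $n^\eps>1$.
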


\begin{remark}
The constant $72$ in the lower bound for $r_{\ell_i}$ can be replaced with arbitrary fixed number, by adjusting the assumption on $d$ accordingly.    
\end{remark}

The proof of the above lemma closely follows an argument from \cite{AT2024};
we provide the complete proof for an interested reader in the appendix.

\section{Bi-Lipschitz non-embedding}\label{sec:bilipschitz}

The goal of this section is to complete the proof of the main
result of the note: for any $\beta > 0$, with high probability,
a random $\Delta$--regular
graph on $n$ vertices does not embed into
any normed space of dimension $d$ at most $c\log n$
with bi--Lipschitz distortion less than $\beta \exp(\frac{\log n}{Cd})\log n$,
for an appropriate choice of constants $c,C>0$.
By linearity of any normed space metric, 
we can restrict our attention to embeddings which are {\it expansions},
i.e do not decrease pairwise distances.
Given a normed space $X$, an $n$--tuple $\vx$ of elements of $X$,
a graph $G$ on $[n]$,
and a parameter $\alpha$, we will write $G \stackrel{\distortion}{\hookrightarrow} (X,\vx)$ whenever
the mapping $i\to x_i$, $i\leq n$, satisfies
\begin{equation}\label{bfGembeds}
d_G(i,j)\leq \|x_i-x_j\|_X
\leq \alpha\,d_G(i,j),\quad i,j\in[n],
\end{equation}
and, similarly,
$G \not \embeds (X,\vx)$ if \eqref{bfGembeds} does not hold.
By convention, we will assume that whenever the graph $G$
is not connected, \eqref{bfGembeds} fails for all choices
of $X,\alpha$, and $\vx=(x_i)_{i=1}^n$.

The proof of the main result is split into two
cases: either the mapping of our random graph into a normed space
$X$ is ``clustered''
in a sense that the image contains a significant group of points
at relatively small distance from each other, or it is {\it sparse}
in which case we apply the discretization scheme from the previous
section.

In what follows, given a dimension parameter $d$,
we define the target distortion $\alpha$ and the
threshold parameter $\lambda$ for sparsity as
\begin{equation}\label{alphadefinition}
\alpha:= C_{\text{\tiny{\ref{sec:bilipschitz}}}}^{-1}\exp\left(\frac{\log n}
{C_{\text{\tiny{\ref{sec:bilipschitz}}}}\,d}\right)
\log n,\quad \lambda:=\alpha,
\end{equation}
where $C_{\text{\tiny{\ref{sec:bilipschitz}}}}\geq \max(1000, \beta^{-1})$
is a sufficiently large constant which is allowed to depend on $\Delta$.

\subsection{Non-embedding into a non--sparse tuple}

Here, we consider the setting 
where the image of the embedding of $G$ into a normed space $X'$
contains a ``cluster'' of at least $n^{\varepsilon}$
points at distance $O(\lambda)$ from each other.
In that case, we can show {\it deterministically}
that the mapping has bi--Lipschitz distortion greater than $\alpha$
(with $\alpha$ defined above):
\begin{proposition}\label{prop:non-sparseI}
Let $\Delta\geq 3$, let $n\geq n_\Delta$
be a sufficiently large integer
such that $\Delta n$ is even, and let $1\leq d\leq \log n$. Let $G$ be a $\Delta$--regular
connected graph on $[n]$, let $X'$
be a $d$--dimensional normed space, and $\vx$
be an $n$--tuple in $X'$
such that
for some $i\leq n$,
\begin{equation}\label{aldhbfoasfjhbsljf}
\big|\big\{j\neq i:\;\|x_j-x_i\|_{X'}\leq 2\lambda\big\}\big|
\geq n^{\varepsilon}.
\end{equation}
Then $G \not \embeds (X',\vx)$.
\end{proposition}
\begin{proof}
We will assume that $n$ is sufficiently large so that
$3(\Delta-1)^{\frac{\varepsilon}{2}\log_\Delta n}\leq n^{\varepsilon/2}$.
Denote the set of indices $j$ from \eqref{aldhbfoasfjhbsljf} by $J$,
so that $|J|\geq n^{\varepsilon}$.

First, we claim that the assumptions imply
that there is a point $x_{i'}\in[n]$
with at least $n^{\varepsilon/2}$
points $x_j$, $j\neq i'$, at distance at most
$\frac{\varepsilon}{2}\log_\Delta n$ from $x_{i'}$.
Indeed, assume the opposite.
We get that
for every point $z\in X'$,
there are at most $n^{\varepsilon/2}+1$ indices $j\leq n$
with $\|z-x_j\|_{X'}\leq\frac{\varepsilon}{4}\log_\Delta n$.
Therefore, the union of translates
$$x_j+B_X\Big(0,\frac{\varepsilon}{4}\log_\Delta n\Big),\quad j\in J,$$
covers every point in
$x_i+B_X(0,2\lambda+\frac{\varepsilon}{4}\log_\Delta n)$ at most 
$n^{\varepsilon/2}+1$ times, and each of those translates is 
entirely contained in $x_i+B_X(0,2\lambda+\frac{\varepsilon}{4}\log_\Delta n))$.
A standard volumetric argument implies
$$
\Vol\Big(B_X\Big(0,2\lambda+\frac{\varepsilon}{4}\log_\Delta n\Big)\Big)
\geq n^\varepsilon\big(n^{\varepsilon/2}+1\big)^{-1}
\Vol\Big(B_X\Big(0,\frac{\varepsilon}{4}\log_\Delta n\Big)\Big),
$$
that is,
$$
\Big(2\lambda+\frac{\varepsilon}{4}\log_\Delta n\Big)^d
\geq \frac{n^\eps}{n^{\varepsilon/2}+1}
\Big(\frac{\varepsilon}{4}\log_\Delta n\Big)^d.
$$
The last inequality is clearly false, assuming that $d\leq\log n$
and that the constant $C_{\text{\tiny{\ref{sec:bilipschitz}}}}$
from the definition of $\lambda$ is sufficiently large.
The contradiction shows that the original claim is true.

By $\Delta$--regularity, for every integer $k\geq 1$ there are less than
$3(\Delta-1)^{k}$ vertices $j\neq i'$ in $G$
at distance at most $k$ from $i'$.
In particular, there are less than
$$
3(\Delta-1)^{\frac{\varepsilon}{2}\log_\Delta n}\leq n^{\varepsilon/2}
$$
points at graph distance at most $\frac{\varepsilon}{2}\log_\Delta n$ from $i$.
Combining the last assertion with the above claim,
we get a point $j\neq i'$ in $G$ such that
$d_G(i',j)>\frac{\varepsilon}{2}\log_\Delta n$
and $\|x_j-x_{i'}\|_{X'}\leq \frac{\varepsilon}{2}\log_\Delta n$.
The result follows.
\end{proof}

\subsection{Robust non-embedding into a $\lambda$--sparse tuple}

The next proposition provides a non-embedding result
for our random graph model conditioned
on the requirement that the image of the embedding is
$\lambda$--sparse.

\begin{proposition}[Robust non-embedding into $\lambda$--sparse tuple]\label{prop:non-embedding}
Let $\Delta\geq 3$, let $n$ be a sufficiently
large integer with $\Delta n$ even,
and let dimension parameter $d\geq 1$ satisfy $300^d\leq n^\varepsilon/2$. 
Let $\alpha$ and $\lambda$ be defined by \eqref{alphadefinition}.
Let $X$ be any $d$--dimensional normed space,
and let $B_{\text{BM}}(X;\,2)$ denote the set of $d$-dimensional normed spaces having (multiplicative) Banach--Mazur distance at most two from $X$.
Let ${\bf G}$ be a uniform random $\Delta$--regular graph on $[n]$.
Then, for some universal constant $c_{\text{\tiny{\ref{prop:non-embedding}}}} > 0$,
    \[
        \PP[{\bf G}]{\exists\, (X',\vx) \in B_{\text{BM}}(X;\,2) \times \domain(n^2,\lambda,X) \,:\, {\bf G} \embeds (X',\vx)}
        \leq \exp(-c_{\text{\tiny{\ref{prop:non-embedding}}}}\,n\log n).
    \]
\end{proposition}

We recall for the reader's convenience that $\domain(n^2,\lambda, X)$ is the set of $\lambda$--sparse $n$--tuples in $B_X(0,n^2)$. 

\begin{proof}
Define
$L:=n^\varepsilon$.
Fix any $d$--dimensional normed space $X=(\RR^d,\|\cdot\|_X)$.
    For $\vx \in \domain(n^2,\lambda,X)$, let $\calL(\vx,X)$ be the set of ``long distances'' for $\vx$, defined as:
    \[
        \calL(\vx,X) := \cb{\{i,j\}\subset [n]\,:\,\|\hat x_i - \hat x_j\|_X > \frac{1}{3} \, \min(r_{\ell_i},\, r_{\ell_j})}
    \]  
    (where the discretization $\hx$ of $\vx$ is also constructed in the space $X$).
    Observe that $\calL(\vx,X)$ depends on $\vx$ only through $\hx$ and $\ell(\vx)$. Thus, the first assertion of \cref{lemma:discretization}
    yields:
    \begin{align}
        \ba{\cb{\calL(\vx,X) \,:\, \vx \in \domain(n^2,\lambda,X)}} &\le  \ba{\cb{(\hx,\ell(\vx)) \,:\, \vx \in \domain(n^2,\lambda,X)}} \nonumber
        \\
        &\le \big(\pa{3/c_0}^d\big)^n\cdot \big(n^{1-\eps}\log^4 n\big)^n \cdot n^{n^{1-\eps/2}} \,. \label{eq:cardinality-long}
    \end{align}
    Fix for a moment any $\vx\in\domain(n^2,\lambda,X)$.
    We claim that whenever the edge-set $E(G)$
    of a graph $G$ on $[n]$ has a non-empty intersection with $\calL(\vx,X)$, then for every $X' \in B_{\mathrm{BM}}(X,2)$,
    \[
        G \not \embeds (X', \vx)\,.
    \]
    Indeed, assume for contradiction there exists a pair $\{i,j\} \in
    \calL(\vx,X)$ with $\{i,j\}\in E(G)$, and a normed space $X' \in B_{\mathrm{BM}}(X,2)$
    such that $G \embeds (X',\vx)$. Then $\|x_i -x_j\|_{X'} \le \alpha$. As $X' \in B_{\mathrm{BM}}(X,2)$, it follows that $\|x_i - x_j\|_{X} \le 2\alpha $. By the second assertion of \cref{lemma:discretization} applied with $t:=2\alpha$, 
    \[
        \|\hat x_i -\hat x_j\|_{X} \leq 2\alpha+2 + 3c_0 \min(r_{\ell_i}, r_{\ell_j}) + 4c_0 \alpha< \frac{1}{3} \min(r_{\ell_i}, r_{\ell_j})\,,
    \]
    where in the last inequality we used 
    the bound $\min(r_{\ell_i}, r_{\ell_j})\geq 72\lambda= 72\alpha$ from Lemma~\ref{lemma:scale-lb}.
    This implies that $\{i,j\} \not \in \calL(\vx,X)$, which supplies the desired contradiction.

\medskip
    
    Utilizing the proven claim and then applying \eqref{eq:cardinality-long}, we compute:
    \begin{align*}
        &\PP[{\bf G}]{\exists\, (X',\vx) \in B_{\text{BM}}(X;\,2) \times \domain(n^2,\lambda,X) \,:\, {\bf G} \embeds (X',\vx)}\\
        &\quad\quad\le \PP[{\bf G}]{\exists\, \vx \in  \domain(n^2,\lambda,X) \,:\, E({\bf G})\cap \calL( \vx,X)=\emptyset} \\
        &\quad\quad\le\big(\pa{3/c_0}^d\big)^n\cdot \big(n^{1-\eps}\log^4 n\big)^n
        \cdot n^{n^{1-\eps/2}}\,
        \max\limits_{\vx \in  \domain(n^2,\lambda,X)}
        \PP[{\bf G}]{E({\bf G})\cap \calL ( \vx,X)=\emptyset}\,.
    \end{align*}
    
Fix any $\vx \in  \domain(n^2,\lambda,X)$.
As Lemma~\ref{lemma:scale-lb} implies $\frac{4}{9}\,r_{\ell_{i}}-2>\frac{1}{3}\,r_{\ell_i}$, observe that the third assertion of \cref{lemma:discretization} yields
\begin{equation}\label{calLuconstraints}
\big|\big\{j\in [n]\setminus\{i\}:\;\{i,j\}\notin\calL( \vx,X)\big\}\big|\leq L\,n^\eps=n^{2\eps},
\quad i\in[n].    
\end{equation}

\medskip
    
    In the remainder of this proof, let 
    $\mathbb P_{G(n,\Delta n / 2)}$ denote the uniform measure on
    the set $G(n,\Delta n / 2)$ of graphs on $[n]$ with $\Delta n /2$ edges. 
    We compute:
    \begin{align*}
        \PP[{\bf G}]{E({\bf G})\cap \calL ( \vx,X)=\emptyset} &= \PP[G(n,\Delta n / 2)]{E(G)\cap \calL( \vx,X)=\emptyset\,\big|\,G \in \dreg} \\
        &\le \frac{\PP[G(n,\Delta n / 2)]{
        E(G)\cap \calL( \vx,X)=\emptyset}}{\PP[G(n,\Delta n / 2)]{G \in \dreg}}\,.
    \end{align*}
    In the last line, the probability in the denominator is lower-bounded via \cref{cor:contiguity}. 
    We claim that
    \begin{equation}\label{eq:erdos-renyi}
        \PP[G(n,\Delta n / 2)]{ E(G)\cap \calL( \vx,X)=\emptyset} \le \cE{-\frac{(1-2\eps)}{2}\,n\,\Delta\log(n) + \aO{n\Delta}}\,.
    \end{equation}
    Indeed, note that $|E(G) \cap \calL(\vx,X)|$, $G\sim \mathbb P_{G(n,\Delta n / 2)}$ is distributed as a hypergeometric random variable with $n(n-1)/2$ total population, $|\calL(\vx,X)|$ population marked as ``success,'' and $n\Delta/2$ trials. Then
    \begin{align*}
        \PP[G(n,\Delta n / 2)]{ E(G)\cap \calL( \vx,X)=\emptyset} = \frac{
        \binom{\binom{n}{2} - |\calL(\vx,X)|}{n\Delta/2}}{\binom{\binom{n}{2}}{n\Delta/2}}.
    \end{align*}
    We have by \eqref{calLuconstraints} that $|\binom{n}{2}-|\calL(\vx,X)|| \le n^{1+2\eps}$. An application of the elementary inequality
    $$(m/k)^k \le \binom{m}{k} \le (me/k)^k, \quad \text{ for } 1\leq k\leq m,$$ yields
    \begin{align*}
        \PP[G(n,\Delta n / 2)]{ E(G)\cap \calL( \vx,X)=\emptyset} &\le \pa{\frac{e n^{1+2\eps}}{n(n-1)/2}}^{n\Delta/2} 
        = \cE{-\frac{1-2\eps}{2}\,n\,\Delta \log n + \aO{n\Delta}} \,.
    \end{align*}
    This establishes \eqref{eq:erdos-renyi}. Combining \eqref{eq:erdos-renyi} with the computations preceding it implies
    \begin{align*}
        \PP[{\bf G}]{E({\bf G})\cap \calL ( \vx,X)=\emptyset}  &
        \leq \cE{-\frac{1-2\eps}{2}\,n\,\Delta \log n + o(n\log n)} \,,
    \end{align*}
and hence, taking $c_{\text{\tiny{\ref{prop:non-embedding}}}} = \eps/10$ and recalling $\Delta \ge 3$,
\begin{align*}
&\PP[{\bf G}]{\exists\, (X',\vx) \in B_{\text{BM}}(X;\,2) \times \domain(n^2,\lambda,X) \,:\, {\bf G} \embeds (X',\vx)}\\
&\leq
\big(\pa{3/c_0}^d\big)^n\cdot \big(n^{1-\eps}\log^4 n\big)^n
\cdot n^{n^{1-\eps/2}}
\cE{-\frac{1-2\eps}{2}\,n\,\Delta \log n + o(n\log n)}
<\exp(-c_{\text{\tiny{\ref{prop:non-embedding}}}}\,n\log n).
\end{align*}
\end{proof}

\begin{remark}
Observe that the proof of Proposition~\ref{prop:non-embedding}
does not actually depend on the model of randomness of the graph
${\bf G}$, except for the number of edges it has and the condition
that the edges are ``well spread'' within ${[n]\choose 2}$.
In particular, replacing ${\bf G}$ with the Erdos--Renyi
model $G(n,\Delta n/2)$ would yield a similar result.
\end{remark}

\subsection{Proof of the main result}

Note that the following technical statement, combined with the upper--bound in \eqref{eq:state of the art}, readily implies \cref{thm:graphs} and thus \cref{thm:main}, completing the article. 

\begin{theorem}[Bi-Lipschitz non-embedding]\label{thm:bilipschitz}
Let $\Delta\geq 3$, let $n$ be a sufficiently
large integer with $\Delta n$ even,
and let dimension parameter $d\geq 1$ satisfy $300^d\leq n^\varepsilon/2$. 
Let $\alpha$ be defined by \eqref{alphadefinition}.
Then
\begin{align*}
    &\PP[{\bf G}]{\mbox{${\bf G}$
    embeds with bi-Lipschitz distortion at most
    $\alpha$ into some $d$--dimensional
    normed space}}\\
    &\leq \exp(-cn\log n),
\end{align*}
for some universal constant $c>0$.
\end{theorem}
\begin{proof}
Recall that $\mathcal B_d$ denotes the Banach--Mazur compactum, and let
$\netB(d)$ be the net in $\mathcal{B}_d$ defined in Lemma~\ref{lemma:BM-entropy} (in view of our requirements on $d$, we have
$|\netB(d)|\leq \exp(\sqrt{n})$).

To get the result, it is sufficient to prove that
$$
\PP[{\bf G}]{\exists\, (X',\vx)\in \mathcal{B}_d
\times (\RR^d)^n\;:\, {\bf G} \embeds (X',\vx)}
        =\ao{1}.
$$
Observe that, in view of Proposition~\ref{prop:non-sparseI},
every normed space $X'$ and every tuple such that
${\bf G} \embeds (X',\vx)$, must satisfy
$$
\big|\big\{j\neq i:\;\|x_j-x_i\|_{X'}\leq 2\lambda\big\}\big|
< n^{\varepsilon},\quad i\in[n],
$$
i.e., the tuple $\vx$ must be $2\lambda$--sparse with respect to $X'$.
If $X\in \netB(d)$ is any normed space with the Banach--Mazur
distance at most $2$ from $X'$ then the tuple $\vx$ is $\lambda$--sparse
in $X$. Furthermore, using translational invariance of the metric in a normed space, we can assume that the first component of the tuple $\vx$ is zero, so that in particular $\vx\subset B_X(0,\alpha n)\subset B_X(0,n^2)$.
Thus, we can write
\begin{align*}
&\PP[{\bf G}]{\exists\, (X',\vx)\in \mathcal{B}_d
\times (\RR^d)^n\;:\, {\bf G} \embeds (X',\vx)}\\
&\leq \sum\limits_{X\in \netB(d)}
\PP[{\bf G}]{\exists\, (X',\vx) \in B_{\text{BM}}(X;\,2) \times \domain(n^2,\lambda,X) \,:\, {\bf G} \embeds (X',\vx)}\\
&\leq \exp(-c_{\text{\tiny{\ref{prop:non-embedding}}}} \,n\log n+\sqrt{n}),
\end{align*}
where at the last step we applied the cardinality estimate for $\netB(d)$
and Proposition~\ref{prop:non-embedding}.
The result follows.
\end{proof}

\bigskip

\noindent {\bf An open problem.}
In this work, we do not consider the interesting regime of
{\it sublogarithmic} distortion.
Whereas essentially optimal bounds on $k_n^\alpha(\ell_\infty)$
are known for $\alpha=O\big(\frac{\log n}{\log \log n}\big)$
(see \cite{naor-snapshot} for details),
the following problem remains open as of this writing:
\begin{problem}
Find optimal bounds on the metric dimension reduction modulus
$k_n^\alpha(\ell_\infty)$ in the range
$$\frac{\log n}{\log \log n}\leq \alpha\leq \log n.$$
\end{problem}

\bibliography{references}

\begin{thebibliography}{AdRRP92}

\bibitem[ABN11]{ABN2011}
Ittai Abraham, Yair Bartal, and Ofer Neiman.
\newblock Advances in metric embedding theory.
\newblock {\em Adv. Math.}, 228(6):3026--3126, 2011.

\bibitem[AdRRP92]{deReynaPiazza92}
Juan Arias-de Reyna and Luis Rodr\'{\i}guez-Piazza.
\newblock Finite metric spaces needing high dimension for {L}ipschitz embeddings in {B}anach spaces.
\newblock {\em Israel J. Math.}, 79(1):103--111, 1992.

\bibitem[ADTT24]{ADTT2024}
Dylan~J. Altschuler, Pandelis Dodos, Konstantin Tikhomirov, and Konstantinos Tyros.
\newblock A combinatorial approach to nonlinear spectral gaps.
\newblock {\em arXiv preprint}, 2024.

\bibitem[ALN08]{sparsestcut}
Sanjeev Arora, James~R. Lee, and Assaf Naor.
\newblock Euclidean distortion and the sparsest cut.
\newblock {\em J. Amer. Math. Soc.}, 21(1):1--21, 2008.

\bibitem[AT25]{AT2024}
Dylan~J. Altschuler and Konstantin Tikhomirov.
\newblock Universal geometric non-embedding of random regular graphs.
\newblock {\em preprint}, 2025.

\bibitem[Bou85]{Bourgain-Lipschitz}
J.~Bourgain.
\newblock On {L}ipschitz embedding of finite metric spaces in {H}ilbert space.
\newblock {\em Israel J. Math.}, 52(1-2):46--52, 1985.

\bibitem[Dvo61]{Dvoretzky}
Aryeh Dvoretzky.
\newblock Some results on convex bodies and {B}anach spaces.
\newblock In {\em Proc. {I}nternat. {S}ympos. {L}inear {S}paces ({J}erusalem, 1960)}, pages 123--160. Jerusalem Academic Press, Jerusalem, 1961.

\bibitem[Esk22]{Eskenazis2022}
Alexandros Eskenazis.
\newblock Average distortion embeddings, nonlinear spectral gaps, and a metric {J}ohn theorem [{\it after} {A}ssaf {N}aor].
\newblock {\em Ast\'{e}risque}, (438, S\'{e}minaire Bourbaki. Vol. 2021/2022. Expos\'{e}s 1181--1196):Exp. No. 1188, 295--333, 2022.

\bibitem[FM88]{FranklMaehara}
P.~Frankl and H.~Maehara.
\newblock The {J}ohnson-{L}indenstrauss lemma and the sphericity of some graphs.
\newblock {\em J. Combin. Theory Ser. B}, 44(3):355--362, 1988.

\bibitem[IM99]{ANNS-1}
Piotr Indyk and Rajeev Motwani.
\newblock Approximate nearest neighbors: towards removing the curse of dimensionality.
\newblock In {\em S{TOC} '98 ({D}allas, {TX})}, pages 604--613. ACM, New York, 1999.

\bibitem[IMS17]{indyk-survey2}
Piotr Indyk, Ji{\v{r}}{\'\i} Matou{\v{s}}ek, and Anastasios Sidiropoulos.
\newblock 8: low-distortion embeddings of finite metric spaces.
\newblock In {\em Handbook of discrete and computational geometry}, pages 211--231. Chapman and Hall/CRC, 2017.

\bibitem[Ind01]{indyk-survey}
Piotr Indyk.
\newblock Algorithmic applications of low-distortion geometric embeddings.
\newblock In {\em 42nd {IEEE} {S}ymposium on {F}oundations of {C}omputer {S}cience ({L}as {V}egas, {NV}, 2001)}, pages 10--33. IEEE Computer Soc., Los Alamitos, CA, 2001.

\bibitem[JL84]{JLlemma}
William~B. Johnson and Joram Lindenstrauss.
\newblock Extensions of {L}ipschitz mappings into a {H}ilbert space.
\newblock In {\em Conference in modern analysis and probability ({N}ew {H}aven, {C}onn., 1982)}, volume~26 of {\em Contemp. Math.}, pages 189--206. Amer. Math. Soc., Providence, RI, 1984.

\bibitem[JLS87]{JLS87}
William~B. Johnson, Joram Lindenstrauss, and Gideon Schechtman.
\newblock On {L}ipschitz embedding of finite metric spaces in low-dimensional normed spaces.
\newblock In {\em Geometrical aspects of functional analysis (1985/86)}, volume 1267 of {\em Lecture Notes in Math.}, pages 177--184. Springer, Berlin, 1987.

\bibitem[Kle99]{ANNS-2}
Jon~M. Kleinberg.
\newblock Two algorithms for nearest-neighbor search in high dimensions.
\newblock In {\em S{TOC} '97 ({E}l {P}aso, {TX})}, pages 599--608. ACM, New York, 1999.

\bibitem[LLR95]{LLR95}
Nathan Linial, Eran London, and Yuri Rabinovich.
\newblock The geometry of graphs and some of its algorithmic applications.
\newblock {\em Combinatorica}, 15(2):215--245, 1995.

\bibitem[Mat96]{Matousek-distortion}
Ji\v{r}\'{\i} Matou\v{s}ek.
\newblock On the distortion required for embedding finite metric spaces into normed spaces.
\newblock {\em Israel J. Math.}, 93:333--344, 1996.

\bibitem[Mat02]{mat-book}
Ji\v{r}\'{\i} Matou\v{s}ek.
\newblock {\em Lectures on discrete geometry}, volume 212 of {\em Graduate Texts in Mathematics}.
\newblock Springer-Verlag, New York, 2002.

\bibitem[MW91]{enumerate}
Brendan~D. McKay and Nicholas~C. Wormald.
\newblock Asymptotic enumeration by degree sequence of graphs with degrees {$o(n^{1/2})$}.
\newblock {\em Combinatorica}, 11(4):369--382, 1991.

\bibitem[Nao17]{Naor2017}
Assaf Naor.
\newblock A spectral gap precludes low-dimensional embeddings.
\newblock In {\em 33rd {I}nternational {S}ymposium on {C}omputational {G}eometry}, volume~77 of {\em LIPIcs. Leibniz Int. Proc. Inform.}, pages Art. No. 50, 16. Schloss Dagstuhl. Leibniz-Zent. Inform., Wadern, 2017.

\bibitem[Nao18]{naor-snapshot}
Assaf Naor.
\newblock Metric dimension reduction: a snapshot of the {R}ibe program.
\newblock In {\em Proceedings of the {I}nternational {C}ongress of {M}athematicians---{R}io de {J}aneiro 2018. {V}ol. {I}. {P}lenary lectures}, pages 759--837. World Sci. Publ., Hackensack, NJ, 2018.

\bibitem[Pis15]{pisier}
Gilles Pisier.
\newblock On the metric entropy of the {B}anach-{M}azur compactum.
\newblock {\em Mathematika}, 61(1):179--198, 2015.

\bibitem[TJ89]{NTJ}
Nicole Tomczak-Jaegermann.
\newblock {\em Banach-{M}azur distances and finite-dimensional operator ideals}, volume~38 of {\em Pitman Monographs and Surveys in Pure and Applied Mathematics}.
\newblock Longman Scientific \& Technical, Harlow; copublished in the United States with John Wiley \& Sons, Inc., New York, 1989.

\bibitem[Wor99]{wormald}
N.~C. Wormald.
\newblock Models of random regular graphs.
\newblock In {\em Surveys in combinatorics, 1999 ({C}anterbury)}, volume 267 of {\em London Math. Soc. Lecture Note Ser.}, pages 239--298. Cambridge Univ. Press, Cambridge, 1999.

\end{thebibliography}
\bibliographystyle{alpha}

\appendix

\section{Discretization scheme: details}

\begin{proof}[Proof of Lemma~\ref{lemma:discretization}]
    We prove the items in the statement of the lemma in order. \\

    \textbf{Proof of claim 1}. By definition, $|\net_0|$ is the covering number of $B_X(0, D)$ using translates of $B_X(0,1)$. 
    By elementary volumetric considerations, along with the standard duality between covering and packing, 
    \[
        |\net_0| \le \pa{3 \, D }^d \le (3n)^{2n^{\eps/4}}\,.
    \]
    Recalling the definition of $\net(y,r_\ell;c_0r_\ell)$, we similarly have:
    \[
        \max_{\substack{y \in \RR^d \\ \ell \ge 0}}|\net(y,r_\ell;c_0r_\ell)| \le \pa{3/c_0}^d\,.
    \]
    Recall further that for every
    $\ell\in\{\lfloor \log_2\lambda\rfloor+1,\dots, \lceil\log_2(2D)\rceil\}$,
    $\hat S_\ell$ is a submultiset of $\net_0$ of size $\lfloor n^{1-\eps}\log^2 n\rfloor$.
    Fix for a moment
    any multisets $S_{\ell}^*\subset \net_0$ of size $\lfloor
    n^{1-\eps}\log^2 n\rfloor$, $\lfloor \log_2\lambda\rfloor+1\leq \ell\leq
    \lceil\log_2(2D)\rceil$,
    arbitrary numbers $\ell_i^*\in \{\lfloor \log_2\lambda\rfloor+1,\dots, \lceil\log_2(2D)\rceil\}$,
    and arbitrary elements $s_i^*\in S_{\ell_i}^*$, $i\leq n$, and consider the set
    $$
    \big\{\hx(\vx)\,:\, \vx \in \tuples(D,\lambda),\,\ell_i(\vx)=\ell_i^*,\,
    \hat s_i(\vx)=s_i^*,\;i\leq n;\;\;\hat S_\ell=S_\ell^*,\,\lfloor \log_2\lambda\rfloor+1\leq \ell\leq
    \lceil\log_2(2D)\rceil\big\}.
    $$
    By definition, for every $\vx \in \tuples(D,\lambda)$ with $\ell_i(\vx)=\ell_i^*$
    and $\hat s_i(\vx)=s_i^*$, the $i$--th component of $\hx$
    is the projection of $x_i$ onto the net $\net(s_i^*, r_{\ell_i^*}; c_0 r_{\ell_i^*})$ of size at most $\pa{3/c_0}^d$. Thus, the cardinality of the above set is at most
    $$
    \big(\pa{3/c_0}^d\big)^n.
    $$
    Further, given $\vx \in \tuples(D,\lambda)$ with $\ell_i(\vx)=\ell_i^*$, $i\leq n$,
    and $\hat S_\ell=S_\ell^*$, $\lfloor \log_2\lambda\rfloor+1\leq \ell\leq
    \lceil\log_2(2D)\rceil$,
    there are at most $\big(n^{1-\eps}\log^2 n\big)^n$
    admissible values for the tuple $\big(\hat s_i(\vx)\big)_{i=1}^n$, and hence
    $$
    \big|\big\{\hx(\vx)\,:\, \vx \in \tuples(D,\lambda),\,\ell_i(\vx)=\ell_i^*,\,
    \;\hat S_\ell=S_\ell^*,\,\lfloor \log_2\lambda\rfloor+1\leq \ell\leq
    \lceil\log_2(2D)\rceil\big\}\big|
    \leq \big(\pa{3/c_0}^d\big)^n\cdot \big(n^{1-\eps}\log^2 n\big)^n.
    $$
    The upper estimate on the cardinality of $\net_0$ allows for estimating
    the number of potential choices for sets $\hat S_\ell$, 
    leading to the bound
    \begin{align*}
    \big|\big\{\hx(\vx)\,:\, \vx \in \tuples(D,\lambda),\,\ell_i(\vx)=\ell_i^*,\;i\leq n
    \big\}\big|
    &\leq \big(\pa{3/c_0}^d\big)^n\cdot \big(n^{1-\eps}\log^2 n\big)^n
    \cdot |\net_0|^{(1+\lceil\log_2(2D)\rceil)
    \lfloor n^{1-\eps}\log^2 n\rfloor}\\
    &\leq \big(\pa{3/c_0}^d\big)^n\cdot \big(n^{1-\eps}\log^2 n\big)^n
    \cdot \big((3n)^{2n^{\eps/4}}\big)^{n^{1-\eps}\log^4 n}.
    \end{align*}
    Finally, in view of Remark~\ref{boundsonell},
    there are at most $\big(1+\lceil\log_2(2n^2)\rceil\big)^n$ possible realizations of $\ell_i(\vx)$, $i\leq n$.
    Combining the bounds, we get the claim. \\

    \textbf{Proof of claim 2.} Pick any $i,j$ with $i\neq j$. By triangle inequality and the construction of $\ell_j$:
    \[
        \ba{\cb{i' \in [n]\,:\, \|x_j - x_{i'}\|_X \le \|x_i-x_j\|_X + r_{\ell_i}}} \ge L\,n^{\eps}\,.
    \]
    As we have assumed $\|x_i - x_j\|_X \le t$, it holds by definition of $\ell_j$ that $r_{\ell_j} \le 2(r_{\ell_i} + t)$. 
    Then, by construction of the discretization $\hx$ (see Remark~\ref{distxitohatxi}),
    \begin{align*}
        \|\hat x_i - \hat x_j\|_X \le \|x_i - x_j\|_X + \|x_i - \hat x_i\|_X + \|x_j - \hat x_j\|_X  \le   t+2 + c_0 (r_{\ell_i} + r_{\ell_j})
        \leq t+2 + 3c_0 r_{\ell_i} + 2c_0 t. \\
    \end{align*}

    \textbf{Proof of claim 3.} 
    We prove the claim by contradiction. Assume that
    there is $i\in[n]$ such that
    \[
        \ba{\cb{j \in [n]\,:\, \|\hat x_i - \hat x_j\|_X \le \frac{4}{9}\,r_{\ell_{i}}-2}} > L\,n^{\eps}\,.
    \]
    Recall that, by the definition of $r_{\ell_i}$,
    \[
        \ba{\cb{j \in [n]\,:\, \|x_i - x_j\|_X \le r_{\ell_{i}}/2}} < L\,n^{\eps}\,.
    \]
    Therefore, there exists $x_{j_0}$ with both
    $\|x_i - x_{j_0}\| > r_{\ell_i}/2$ and $\|\hat x_i - \hat x_{j_0}\| \le \frac{4}{9}r_{\ell_i}-2$.
    By triangle inequality, one of the following must hold:
    \[
        \|\hat x_i - x_i \| \ge r_{\ell_i}/90+1\,, \quad \text{or} \quad \|\hat x_{j_0} - x_{j_0} \| \ge r_{\ell_i}/30+1\,.
    \]
    By construction of $\hx$ (see Remark~\ref{distxitohatxi}), we have that $\|\hat x_i - x_i\| \le c_0 r_{\ell_i}+1$ and $\|\hat x_{j_0} - x_{j_0}\| \le c_0 r_{\ell_{j_0}}+1$. Since $c_0 = .01$, the first of the two options above is impossible. Then the second option must hold, which implies
    \begin{equation}\label{akjnfaskfjn}
    c_0 r_{\ell_{j_0}}+1 \ge r_{\ell_i}/30+1.
    \end{equation}
    On the other hand, the definition of $\ell_{j_0}$
    implies that
    $$
    \log_2 r_{\ell_{j_0}}= \ell_{j_0}\leq\lceil\log_2(\|x_i - x_{j_0}\|_X+r_{\ell_i})\rceil
    \leq \Big\lceil\log_2\Big(\frac{4}{9}r_{\ell_i}+c_0 (r_{\ell_i}+r_{\ell_{j_0}})+r_{\ell_i}\Big)\Big\rceil,
    $$
    and hence $r_{\ell_{j_0}}< 3r_{\ell_i}$.
    Combining the last assertion with \eqref{akjnfaskfjn}, we get
    $$
    3c_0r_{\ell_i}\geq r_{\ell_i}/30,
    $$
    which contradicts our choice of $c_0$. This proves the claim.
\end{proof}

\bigskip

\begin{proof}[Proof of Lemma~\ref{lemma:scale-lb}]
    Assume for the sake of contradiction there is a $\lambda$-sparse tuple $\vx$ in $(B_{X}(0;\, D))^n$ and an index $i \in [n]$ such that
    \[
        \ba{\cb{j \in [n] \,:\, \|x_i - x_j\|_X \le 72\lambda}  } \ge L\,n^{\eps}\,,
    \]
    that is, the ball $B_{X}(x_i, 72\lambda)$ contains at least $L\,n^{\eps}$
    points from $\vx$. By the definition of $\lambda$--sparsity,
    any of the balls $B_X(x_j,\lambda)$, $j\in[n]$, contains at most
    $n^{\eps}+1$
    points from $\vx$. Hence, any covering of $B_{X}(x_i, 72\lambda)$ using translates of $B_X(0,\, \lambda/2)$ must have cardinality at least 
    $L\,n^{\eps}/(n^{\eps}+1)$, since by the pigeonhole principle, any smaller covering would have a $\lambda/2$--ball with more than $n^{\eps}+1$ points from $\vx$.
    By the standard duality between packing and covering, this implies that $B_{X}(x_i, 73\lambda)$ contains as a subset at least $L\,n^{\eps}/(n^{\eps}+1)$ disjoint copies of $B_X(0, \lambda/4)$. By volumetric considerations, we obtain:
    \[
        \frac{L\,n^{\eps}}{n^{\eps}+1}\, (\lambda/4)^{d} \le (73\lambda)^d\,.
    \]
    However, by our choices of parameters, $300^d \le \frac{L\,n^{\eps}}{n^{\eps}+1}$, providing the desired contradiction. Finally, note that by construction of $\ell_i$, we have $\ell_i \geq \log_2(72\lambda)$, so that $r_{\ell_i} \geq 72\lambda$.
\end{proof}

\end{document}